\newcommand{\lb}{[}
\newcommand{\rb}{]}
\newcommand{\lctvs}{LCTVS\xspace}
\newcommand{\ipc}{\ip{\cdot}{\cdot}}
\newcommand{\dirac}{\partial\hspace*{-1.2ex}/}
\newcommand{\res}{\text{res}}
\newcommand{\pol}{\text{pol}}
\theoremstyle{\myrmkstyle}
\newtheorem{examples}[theorem]{Examples}
\begin{document}

\mymaketitle

\section{Introduction}

We introduce the notion of a \emph{co\hyp{}Riemannian structure}.
In infinite dimensions, to give a Riemannian structure is to give a Hilbert completion of each tangent space.
That is, to give an injective continuous linear map from each tangent space to a Hilbert space with dense image.
A \emph{co\hyp{}Riemannian structure} reverses this.
It consists of an injective continuous linear map from a Hilbert space to each tangent space with dense image.

The purpose of this definition is to provide a framework to allow more geometrical constructions than are possible with only a Riemannian structure.
In finite dimensional Riemannian geometry, almost the first result stated is the identification of the tangent and cotangent bundles via the inner products.
This identification is used throughout geometry, often implicitly, and is crucial in many geometrical constructions.
In contrast, a Riemannian structure on an infinite dimensional manifold may not define such an identification.
If it does it is called a \emph{strong} Riemannian structure; such can only exist on Hilbert manifolds and only if the Riemannian structure is chosen carefully.
Although many infinite dimensional manifolds have Hilbertian approximations, there are important examples that do not\emhyp{}such as diffeomorphism groups.
Moreover, even when the manifold is Hilbertian, the ``obvious'' Riemannian structure may not be strong\emhyp{}the space of \(L^{1,2}\)\enhyp{}loops is Hilbertian but the usual Riemannian structure is the \(L^2\)\enhyp{}inner product which is not strong.

A weak (i.e.\ not necessarily strong) Riemannian structure still defines an injective linear map from the tangent to cotangent spaces.
Thus any geometrical construction which only uses this aspect of the above identification can still be carried out for weak Riemannian structures.
There are many, however, that do not; for example, the Koszul formula for the Levi\hyp{}Civita connection, the Dirac operator on a spin manifold, and the gradient of a function.
The purpose of a co\hyp{}Riemannian structure is to provide an injective linear map from the cotangent to the tangent spaces meaning that any geometrical construction which only uses \emph{this} aspect of the identification can still be carried out.
For example, the Dirac operator and the gradient of a function.

Having both a Riemannian and co\hyp{}Riemannian structure will further permit any geometrical construction which uses both maps but which does not require them to be mutually inverse.

\medskip

Our motivating application of this theory is the construction of the Dirac operator in infinite dimensions, in particular on loop spaces.
This problem goes back at least to Witten~\cite{ew} where Witten formally applied the equivariant Atiyah\enhyp{}Singer index theorem to the hypothetical Dirac operator on a suitable loop space.
The resulting object now goes by the name of the \emph{Witten genus} and has provided much interesting mathematics.
Although the Witten genus itself can be defined, its original construction has not been made rigorous.
This has led to its being considered mainly as a topological object without the usual flow of ideas and proofs from analysis that index theory usually affords.
There have been various attempts since then to construct such an operator, usually by assuming some conditions on the original manifold or by reinterpreting the notion of ``loop space'' in some fashion; see \cite{ct2,jjrl2,ew4,rl8,rl7,rl6,rl5,rl4,mstw}.

The problem with defining a Dirac operator in infinite dimensions is simple to describe.
One can gather all the required ingredients as in finite dimensions and set out on the construction, mirroring that of finite dimensions.
However, at one stage a certain map is required.
There is an obvious map in the opposite direction which in finite dimensions is invertible.
In infinite dimensions it is not and the construction fails.

One of the ingredients for the Dirac operator is a Riemannian structure on the manifold.
What we shall show is that if this is replaced by a co\hyp{}Riemannian structure then the ``obvious map'' referred to above is now in the right direction and the construction succeeds.

The final step is to construct an appropriate co\hyp{}Riemannian structure on the free loop space of a string manifold.
This is more complicated and we defer it to a companion paper \cite{math/0809.3108}.

\medskip

Let us now give a more detailed overview of this paper.
In section~\ref{sec:coriem} we shall define and classify co\hyp{}Riemannian structures.
In fact, there is nothing that is particular to the tangent bundle so we shall define and classify co\hyp{}orthogonal structures.
Also, our classification applies equally to orthogonal structures, and refines the ``weak\enhyp{}strong'' classification so we also classify orthogonal structures.
Moreover, we shall start with the orthogonal structures as these are more familiar and then ``op'' (almost) everything to produce the co\hyp{}orthogonal structures.

The definition of a co\hyp{}orthogonal structure is essentially as given in the first paragraph of this introduction.

The classification scheme is based on the question ``How like a bundle of Hilbert spaces is the resulting object?''.
The standard ``weak\enhyp{}strong'' classification can be regarded as a ``not exactly'' or ``exactly'' answer (though the term ``weak orthogonal structure'' can mean either ``not strong'' or ``not necessarily strong'').
To refine this we use the fact that an inner product on a locally convex topological vector space defines a Hilbert completion and so to an orthogonal structure on a vector bundle we can assign a family of Hilbert spaces.
We then apply the above question to this family and to the map that includes the original bundle in this family of Hilbert spaces.
More specifically, we classify the amount of this structure that can be simultaneously locally trivialised.
Depending as it does on the Hilbert completions this classification applies equally well to orthogonal and co\hyp{}orthogonal structures.

In passing we mention the r\^ole of the structure groups.
In infinite dimensions it is relatively rare that one specifies a vector bundle using the full general linear group as its structure group.
This means that structure bundles are more prominent in infinite dimensions and that there is more flexibility in how to modify them\emhyp{}one may wish to be able to enlarge the structure group as well as reduce it.

In section~\ref{sec:dirac} we shall show how to construct a Dirac operator in infinite dimensions using an appropriate co\hyp{}Riemannian structure.
The construction follows that of finite dimensions with no changes: once one has the right starting place, everything else is straightforward.
We shall therefore not go into further detail here as there is not much difference between giving an overview of the construction and the actual construction itself.
Suffice it to say that, as remarked earlier, starting with the co\hyp{}Riemannian structure means that the crucial map goes in the correct direction to define the operator.

\section{Orthogonal Structures in Infinite Dimensions}
\label{sec:coriem}

In this section we shall define and classify orthogonal and co\hyp{}orthogonal structures on infinite dimensional vector bundles.

Let us outline some conventions.
All vector spaces will be locally convex topological vector spaces (\lctvs).
All linear maps and bilinear maps will be assumed to be continuous unless otherwise stated.
Our definitions make sense in both the topological and smooth categories and so we shall not specify which.
All maps, groups, representations, choices will be assumed to be valid in the appropriate category.

In the following, we fix a manifold \(X\), a \lctvs \(V\), a group \(G\), and a vector bundle \(\xi\) with fibre \(V\) and structure group \(G\).

\begin{defn}
An \emph{orthogonal structure} on \(\xi\) is a choice of inner product on the fibres of \(\xi\).
\end{defn}

\begin{remark}
\begin{enumerate}
\item To reiterate the convention made above, these choices must vary in the manner appropriate to the category.

\item The existence of an orthogonal structure is very weak indeed.
If the base manifold \(X\) admits partitions of unity and the fibre \(V\) admits inner products then \(\xi\) admits an orthogonal structure.
\end{enumerate}
\end{remark}

\begin{example}
The \lctvs \(\R^{\N}\) has the property that any map from it to a Banach space must factor through a projection to some \(\R^n\).
Hence it cannot admit an inner product and so any bundle with fibre \(\R^{\N}\) does not admit inner products.
\end{example}

An inner product, \(\ipc\), on \(V\) defines an associated Hilbert space, \((\overline{V}, \ipc)\), together with a continuous, injective, linear map \(V \to \overline{V}\) with dense image.
We refer to this as a \emph{Hilbert completion} of \(V\).
Thus specifying an orthogonal structure on \(\xi\) associates to each fibre a Hilbert completion.
However, these Hilbert spaces may not patch together nicely over the base space.

\begin{defn}
A \emph{locally equivalent orthogonal structure} on \(\xi\) is a smooth choice of inner product on the fibres of \(\xi\) such that \(\xi\) admits a local trivialisation with the property that the transition functions preserve the equivalence class of the inner product.
\end{defn}

\begin{remark}
\begin{enumerate}
\item Given that we have already specified the structure group of \(\xi\), when we talk of ``local trivialisations'' we obviously mean with respect to the given structure group.

\item Recall that two inner products on \(V\) are equivalent if their associated norms are equivalent.
If this is so then the identity on \(V\) extends to an isomorphism between the Hilbert completions (though not necessarily isometrically).

Therefore, a locally equivalent orthogonal structure means that there is a standard Hilbert completion \((\overline{V}, \ipc)\) of \(V\) and a local trivialisation of \(\xi\) in which the Hilbert completions of the fibres of \(\xi\) look like \(\overline{V}\).

\item For such a structure to exist we need to reduce the structure group of \(\xi\) to a subgroup whose action on \(V\) extends to an action on \(\overline{V}\) by isomorphisms.

\item When talking of orthogonal structures in finite dimensions it is usual to think only of reducing the structure group.
Often the initial structure group is the full general linear group in which case reduction is the only possible direction.
However, in infinite dimensions using the full general linear group is relatively uncommon.
It is therefore possible that one may be able to enlarge the structure group (preferably without changing its homotopy type, or without changing it too much).

The reason one might wish to do this is that if the vector bundle with its original structure group does not admit a locally equivalent orthogonal structure, one might be able to find an enlargement of the structure group so that it does.
\end{enumerate}
\end{remark}

\begin{example}
Here is a bundle which does not admit such a structure.
Let \(L \C \coloneqq \Ci(S^1,\C)\) and let \(L_0 \C\) be the subspace of loops which integrate to zero.
Let \(D \colon L_0 \C \to L_0 C\) be the isomorphism given by differentiation: \(D \gamma = \gamma'\).
Define a bundle \(\xi \to S^1\) by quotienting \(L_0 \C \times \lb 0,1 \rb\) by the relation \((\gamma,0) \simeq (D \gamma, 1)\).

We claim that \(\xi\) does not admit a locally equivalent orthogonal structure.
For it to do so we would need an inner product \(\ipc\) on \(L_0 \C\) which was equivalent to the inner product \(\ip{D \cdot}{D \cdot}\).
In particular, there must be a Banach completion of \(L_0 \C\) on which \(D\) acts as an isomorphism.
This is impossible.

However, the standard partition\hyp{}of\hyp{}unity argument shows that \(\xi\) admits an orthogonal structure.
\end{example}

In a locally equivalent orthogonal structure we can locally trivialise the completion but not necessarily the inner products.

\begin{defn}
A \emph{locally trivial orthogonal structure} on \(\xi\) is a smooth choice of inner product on the fibres of \(\xi\) such that \(\xi\) admits a local trivialisation with the property that the transition functions preserve the inner product.
\end{defn}

\begin{remark}
The extra piece here is that the inner product itself is locally constant, not just its equivalence class.
Thus for this to exist we add to the conditions for a locally equivalent orthogonal structure the condition that the subgroup act by isometries.
\end{remark}

\begin{example}
Let \(L_0 \C\) and \(D\) be as above.
With respect to the splitting \(L \C = L_0 \C \oplus \C\) define an inner product \(\ipc_{-1}\) as
\[
  \ip{\alpha + \lambda}{\beta + \mu}_{-1} \coloneqq \ip{D^{-1}\alpha + \lambda}{D^{-1}\beta + \mu}
\]
where \(\ipc\) is the usual \(L^2\)\enhyp{}inner product on \(L \C\).
Write the Hilbert completion as \(L^{-1,2} \C\).
We claim that \(L S^1\) acts continuously on this Hilbert space but that the subgroup which acts by isometries is the constant loops, \(S^1\).
Let \(\zeta \colon S^1 \to S^1\) be the identity map.
Define a vector bundle \(\xi \to S^1\) by quotienting \(L \C \times \lb 0,1 \rb\) by the relation \((\gamma, 0) \simeq (\zeta \gamma, 1)\).
As multiplication by \(\zeta\) extends to \(L^{-1,2} \C\), this can be given a locally equivalent orthogonal structure.
However, there is no reduction to \(S^1\) so this does not admit a locally trivial orthogonal structure.
\end{example}

Thus at this last definition we know that we can view the inner product as coming from a specific inner product on a typical fibre, thus also a specific Hilbert completion.
However, although the class of Hilbert completions is (locally) constant, they may not patch together into a bundle themselves.
Thus we introduce the notion of a structure group for an orthogonal structure.
In so doing we shift focus slightly from inner products to Hilbert completions.

\begin{defn}
\label{def:sostrgrp}
Let \(K\) be a group with a given action on a Hilbert space (not necessarily by isometries).
An \emph{orthogonal structure with structure group \(K\)} on \(\xi\) consists of a bundle \(\zeta \to X\) of Hilbert spaces with structure group \(K\) together with a vector bundle map \(\xi \to \zeta\) which is injective on fibres.
\end{defn}

\begin{remark}
\begin{enumerate}
\item This defines an orthogonal structure on \(\xi\) by pulling back the inner products on the fibres of \(\zeta\) to those of \(\xi\).

\item We do not assume that the fibres of \(\xi\) are dense in those of \(\zeta\).
We do not need this assumption to define the orthogonal structure on \(\xi\) and not having it gives us a little more flexibility.
If we have an orthogonal structure on \(\xi\) it is entirely possible that the dimension of the Hilbert completions will vary over \(X\).
We can still hope for a structure group in this situation as we take the fibre large enough to accommodate all these completions.
\end{enumerate}
\end{remark}

\begin{defn}
\label{def:leostrgrp}
Let \(K\) be a group with a given action on a Hilbert space (not necessarily by isometries).
A \emph{locally equivalent orthogonal structure with structure group \(K\)} on \(\xi\) consists of a bundle \(\zeta \to X\) of Hilbert spaces with structure group \(K\) together with a vector bundle map \(\xi \to \zeta\) which is injective on fibres such that there is a simultaneous local trivialisation of \(\xi\) and \(\zeta\) which takes the map \(\xi \to \zeta\) to the inclusion of \(V\) in some fixed Hilbert completion.
\end{defn}

\begin{remark}
For this to happen we need to add to the conditions for a locally equivalent orthogonal structure the assumption that the action of the subgroup of \(G\) on the Hilbert completion \(\overline{V}\) factor through the group \(K\).
\end{remark}

If \(K\) acts by isometries, we obtain the following structure.

\begin{defn}
\label{def:ltostrgrp}
Let \(K\) be a group with a given action on a Hilbert space by isometries.
A \emph{locally trivial orthogonal structure with structure group \(K\)} on \(\xi\) consists of a bundle \(\zeta \to X\) of Hilbert spaces with structure group \(K\) together with a vector bundle map \(\xi \to \zeta\) which is injective on fibres such that there is a simultaneous local trivialisation of \(\xi\) and \(\zeta\) which takes the map \(\xi \to \zeta\) to the inclusion of \(V\) in some fixed Hilbert completion.
\end{defn}

\begin{remark}
For a locally equivalent orthogonal structure with structure group \(K\) the most common choices for \(K\) contain within them a homotopy equivalent subgroup that acts by isometries.
One can therefore trivialise the bundle of Hilbert spaces so that the transition functions act by isometries.
If one can \emph{simultaneously} trivialise the original bundle then one has a locally trivial orthogonal structure, otherwise just a locally equivalent one.
\end{remark}

The final level of classification involves the maps from \(\xi\) to the associated Hilbert bundle.
These are essentially linear definitions so we define them for vector spaces and make the obvious generalisation to vector bundles.

\begin{defn}
Let \(\m{J}(V,H) \subseteq \m{L}(V,H)\) be a functorial (in \(H\)) ideal of operators from \(V\) to Hilbert spaces.
We say that an inner product on \(V\) is of \emph{class \(\m{J}\)} if the inclusion of \(V\) into the associated Hilbert completion is of class \(\m{J}\).

Let \(\m{K}(H_1,H_2) \subseteq \m{L}(H_1,H_2)\) be an ideal of operators from Hilbert spaces to Hilbert spaces (functorial in both arguments).
We say that an inner product on \(V\) is of \emph{class per\hyp{}\m{J}} if whenever \(H_1 \to V\) is a continuous linear map then the composition \(H_1 \to V \to \overline{V}\) is of class \(\m{K}\), and the induced map \(\m{L}(H_1,V) \to \m{K}(H_1,\overline{V})\) is continuous.

We extend these to orthogonal structures in the obvious way.
\end{defn}

\begin{examples}
\begin{enumerate}
\item The inclusion of \(L^{1,2} S^1\) in \(L^2 S^1\) is a Hilbert\enhyp{}Schmidt operator.
Therefore the \(L^2\)\enhyp{}inner product on the space of \(L^{1,2}\)\enhyp{}loops in a smooth manifold is an orthogonal structure of Hilbert\enhyp{}Schmidt type.

\item If the typical fibre of the vector bundle is nuclear or dual\hyp{}nuclear then every orthogonal structure on it will be per\hyp{}nuclear.
\end{enumerate}
\end{examples}

We now wish to ``op'' all of this to define \emph{co\hyp{}orthogonal structures}.
The main difficulty is that there is not a suitable body of linear theory to fall\hyp{}back on.
This makes it unclear what exactly a ``co\hyp{}inner product'' should be\emhyp{}it is not even clear that this can be given a sensible definition mirroring all the properties of an inner product.
The situation becomes easier when working with the definitions involving structure groups as these hinge on maps from the \lctvs \(V\) to some Hilbert space.
Thus we reverse this and consider maps from Hilbert spaces into \(V\).

Let us give the basic definitions, corresponding to definitions~\ref{def:sostrgrp}, \ref{def:leostrgrp}, and \ref{def:ltostrgrp}.
The subsequent modifications are obvious.

\begin{defn}
Let \(K\) be a group with a given action on a Hilbert space (not necessarily by isometries).
A \emph{co\hyp{}orthogonal structure with structure group \(K\)} on \(\xi\) consists of a bundle \(\zeta \to X\) of Hilbert spaces with structure group \(K\) together with a vector bundle map \(\zeta \to \xi\) which is dense on fibres.
\end{defn}

\begin{defn}
Let \(K\) be a group with a given action on a Hilbert space (not necessarily by isometries).
A \emph{locally equivalent co\hyp{}orthogonal structure with structure group \(K\)} on \(\xi\) consists of a bundle \(\zeta \to X\) of Hilbert spaces with structure group \(K\) together with a vector bundle map \(\zeta \to \xi\) for which there is a simultaneous local trivialisation of \(\xi\) and \(\zeta\) which takes the map \(\xi \to \zeta\) to the inclusion of some fixed Hilbert space as a dense subspace of \(V\).
\end{defn}

If \(K\) acts by isometries, we obtain the following structure.

\begin{defn}
Let \(K\) be a group with a given action on a Hilbert space by isometries.
A \emph{locally trivial co\hyp{}orthogonal structure with structure group \(K\)} on \(\xi\) consists of a bundle \(\zeta \to X\) of Hilbert spaces with structure group \(K\) together with a vector bundle map \(\zeta \to \xi\) for which there is a simultaneous local trivialisation of \(\xi\) and \(\zeta\) which takes the map \(\xi \to \zeta\) to the inclusion of some fixed Hilbert space as a dense subspace of \(V\).
\end{defn}

The relationship between orthogonal and co\hyp{}orthogonal structures is captured in the following result.

\begin{proposition}
A co\hyp{}orthogonal structure on \(\xi\) defines an orthogonal structure on \(\xi^*\) of the same type and an injective bundle map \(\xi^* \to \xi\) (conjugate linear if over \C).

An orthogonal structure on \(\xi\) defines an injective bundle map \(\xi \to \xi^*\) (conjugate linear if over \C).
If \(V\), the model space of \(\xi\), is semi\hyp{}reflexive then an orthogonal structure on \(\xi\) defines a co\hyp{}orthogonal structure on \(\xi^*\) of the same type.
\end{proposition}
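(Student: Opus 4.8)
The plan is to work fibrewise and then observe that everything is natural enough to assemble into bundle maps. Start with the first statement. Given a co\hyp{}orthogonal structure, we have a Hilbert bundle \(\zeta\) and a bundle map \(\zeta \to \xi\) which is dense on fibres; write the typical fibre map as \(j \colon H \to V\) with dense image. Dualising gives \(j^* \colon V^* \to H^* \cong \overline{H}\) (using the Riesz identification of \(H\) with its dual, which is where the conjugate\hyp{}linearity over \(\C\) enters). Since \(j\) has dense image, \(j^*\) is injective, and it is continuous; composing with the Riesz map exhibits an inner product on \(V^*\) whose Hilbert completion receives \(V^*\) via \(j^*\). This is precisely an orthogonal structure on \(\xi^*\), with associated Hilbert bundle \(\overline{\zeta}\) (the conjugate bundle, or \(\zeta\) itself with conjugated structure\hyp{}group action). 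The injective bundle map \(\xi^* \to \xi\) is then \(V^* \xrightarrow{j^*} \overline{H} \cong H \xrightarrow{j} V\); injectivity of this composite follows because \(j^*\) is injective and \(j\) is injective on the image of \(j^*\)\emhyp{}wait, \(j\) need not be injective in general, but in the co\hyp{}orthogonal set\hyp{}up \(j\) is the inclusion of a Hilbert space, so it \emph{is} injective; hence the composite is injective. For the ``same type'' clause: the class\hyp{}\(\m{J}\) conditions are defined in terms of the inclusion maps and their behaviour under composition with arbitrary continuous linear maps, and these conditions are stable under the dualisation operations (ideals of operators between Hilbert spaces are closed under adjoints, and per\hyp{}\(\m{J}\) is designed to transpose to class\hyp{}\(\m{J}\) and vice versa), so one checks this at the level of the typical fibre map.

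For the second statement, an orthogonal structure on \(\xi\) is by definition a fibrewise inner product, giving a Hilbert completion \(i \colon V \to \overline{V}\), injective with dense image. Dualising and applying Riesz as above gives an injective continuous map \(\xi \to \xi^*\): explicitly \(V \xrightarrow{i} \overline{V} \cong \overline{V}^* \xrightarrow{i^*} V^*\), where \(i^*\) is injective because \(i\) has dense image; this needs no reflexivity hypothesis. Injectivity of the composite uses injectivity of \(i\).

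The reflexivity hypothesis is needed only for the final sentence, and this is the step I expect to be the main obstacle to state cleanly. We want to turn the orthogonal structure on \(\xi\) into a co\hyp{}orthogonal structure on \(\xi^*\), i.e.\ we need a Hilbert bundle mapping \emph{densely} into \(\xi^*\). The natural candidate is \(i^* \colon \overline{V}^* \to V^*\) (after Riesz, a map from a Hilbert space into \(V^*\)), and the issue is exactly whether \(i^*\) has dense image in \(V^*\). The adjoint \(i^*\) has weak-\(*\) dense image always, since \(i\) is injective; to upgrade weak-\(*\) density to density in the given (locally convex) topology on \(V^*\) is where semi\hyp{}reflexivity of \(V\) comes in\emhyp{}under semi\hyp{}reflexivity the relevant topologies on \(V^*\) have the same closed convex sets, so the weak-\(*\) dense image is dense. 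I would phrase this via the bipolar theorem: the closure of the range of \(i^*\) is the polar of the pre\hyp{}annihilator, which is the polar of \(\ker i = \{0\}\), hence all of \(V^*\), provided the ambient topology on \(V^*\) is compatible with the duality \(\langle V, V^*\rangle\), and semi\hyp{}reflexivity guarantees the strong topology on \(V^*\) is such a compatible topology (equivalently, \(V^*\) with its strong topology has dual \(V\)). The ``same type'' clause then transfers as before, at the level of the typical fibre. Finally one remarks that all constructions are functorial in the fibre and commute with the transition functions (conjugated appropriately), so they globalise; this is routine and I would not belabour it.
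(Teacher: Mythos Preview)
Your argument is correct and matches the paper's approach: the crux in both is that the adjoint of an injective dense\hyp{}range map \(T \colon U \to W\) has dense image in \(U^*\) precisely when \(U\) is semi\hyp{}reflexive, and your bipolar\hyp{}theorem formulation is the same as the paper's direct Hahn--Banach contradiction (a nonzero \(g \in U^{**}\) annihilating the image of \(T^*\) must, by semi\hyp{}reflexivity, be evaluation at some \(u \in U\), forcing \(Tu = 0\)). The paper packages this as a single lemma and applies it twice---once with \(U = H\) (Hilbert spaces being semi\hyp{}reflexive) for the co\hyp{}orthogonal\hyp{}to\hyp{}orthogonal direction and once with \(U = V\) for the orthogonal\hyp{}to\hyp{}co\hyp{}orthogonal direction---rather than handling the two cases separately as you do, but the content is identical.
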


\begin{proof}
These are all linear properties.
The maps are simplest.
An inner product on \(V\) defines a map \(V \to V^*\) by \(v \mapsto (u \mapsto \ip{v}{u})\).
If \(H\) is the Hilbert completion, this is \(V \to H \cong H^* \to V^*\).
On the other hand, a map \(H \to V\) with \(H\) a Hilbert space defines a map \(V^* \to V\) via \(V^* \to H^* \cong H \to V\).

Now let us consider the case of a map \(T \colon U \to W\) of \lctvs with \(U\) semi\hyp{}reflexive, \(T\) injective, \(\im T\) dense.
This dualises to a map \(T^* \colon W^* \to U^*\).
This is injective since if \(T^* f = 0\) then \(f \restrict_{\im T} = 0\) and so \(f = 0\).
To show that it has dense image, let us assume for a contradiction that it does not.
Then there is some non\hyp{}zero \(g \in U^{* *}\) such that \(g \restrict_{\im T^*} = 0\).
That is, for all \(f \in W^*\), \(g (T^* f) = 0\).
As \(U\) is semi\hyp{}reflexive, there is some, necessarily non\hyp{}zero, \(u \in U\) such that \(g(h) = h(u)\) for all \(h \in U^*\).
In particular, for \(f \in W^*\), \(g (T^* f) = T^* f (u) = f(T u)\).
Hence \(f(T u) = 0\) contradicting the injectivity of \(T\).

Since Hilbert spaces are semi\hyp{}reflexive, we can apply this to arbitrary co\hyp{}orthogonal structures.
\end{proof}

\begin{examples}
\begin{enumerate}
\item All of our examples of orthogonal structures involved semi\hyp{}reflexive spaces and so can be dualised to co\hyp{}orthogonal structures.

\item The work of \cite{math/0809.3108} shows that the tangent space of the space of smooth loops in a Riemannian manifold, say \(M\), can be given a homotopy locally trivial orthogonal structure with structure group \(O_J\).
This relies on a reduction of the structure group from \(L \gl_n\) to \(L_\pol O_n\) to define a locally equivalent orthogonal structure.
To define a locally trivial orthogonal structure requires a modification of the structure group as outlined above.
There are many such structures; a simple one\hyp{}parameter family can be constructed as follows.
Let \(r \in (0,1)\).
Let \(L^2_r \R^n\) be the family of loops which have an analytic extension over the annulus of inner radius \(r\) and outer radius \(r^{-1}\) and are square integrable on the boundary.
Using the above reduction of the structure group, there is a Hilbert bundle over \(L M\) with a map to the tangent bundle which, in charts, looks like the obvious inclusion \(L^2_r \R^n \to L \R^n\).
\end{enumerate}
\end{examples}

\section{Constructing the Dirac Operator}
\label{sec:dirac}

In this section we shall explain how to use co\hyp{}orthogonal structures to define the Dirac operator on an infinite dimensional manifold.
We shall also follow the steps of the construction starting with an orthogonal structure to best illustrate the point where this fails and the co\hyp{}orthogonal one succeeds.

The theory of spin groups and their associated representations is developed in \cite{rppr} and \cite{apgs}.
In brief, let \(H\) be a real separable infinite dimensional Hilbert space.
There is a certain group, \(\gl_J\) (also called \(\gl_\res\)) acting on \(H\).
The subgroup of this which acts by isometries is written \(S O_J\) (or \(S O_\res\)) and this subgroup is homotopy equivalent to \(\gl_J\).
This subgroup has a central extension, \(\spin_J\) (also called \(\spin_\res\)), by \(S^1\).

\begin{defn}
Let \(X\) be a smooth manifold.
\begin{enumerate}
\item Let \(\zeta \to X\) be a bundle of Hilbert spaces over \(X\) with structure group \(S O_J\).
A \emph{spin structure} on \(\zeta\) consists of the following data.

\begin{enumerate}
\item A lift of the structure group of \(\zeta\) from \(S O_J\) to \(\spin_J\).

\item A connection on the principal \(\spin_J\)\enhyp{}bundle.
If a connection on \(\zeta\) has already been specified the connection on the \(\spin_J\)\enhyp{}bundle should be a lift of that on \(\zeta\).
\end{enumerate}

\item Let \(\xi \to X\) be a vector bundle.
A \emph{(co\hyp{})spin structure} on \(\xi\) consists of the following data.

\begin{enumerate}
\item A (co\hyp{})orthogonal structure on \(\xi\) with structure group \(\gl_J\).

\item A spin structure on the associated bundle of Hilbert spaces, where the orthogonal structure is used to reduce the structure group of the Hilbert spaces from \(\gl_J\) to \(S O_J\).
\end{enumerate}

\item We further classify (co\hyp{})spin structures according to the classification of their (co\hyp{})orthogonal structures.

\item A \emph{(co\hyp{})spin manifold} is a manifold together with a choice of (co\hyp{})spin structure on its tangent bundle.
\end{enumerate}
\end{defn}

The following is well\hyp{}known.

\begin{lemma}
Let \(X\) be a spin or co\hyp{}spin manifold.
Let \(\overline{T X}\) be the Hilbert bundle associated to the (co\hyp{})orthogonal structure on \(T X\).
There are complex bundles \(S^+, S^- \to X\) modelled on Hilbert spaces together with covariant differential operators \(\nabla^{\pm}\) on \(S^\pm\) and fibrewise bilinear maps \(c^\pm \colon \overline{T X} \times S^\pm \to S^\mp\).

The maps \(c^\pm\) define linear maps \(\overline{T X} \otimes_2 S^\pm \to S^\mp\) where \(\otimes_2\) is the Hilbert completion of the tensor product. \noproof
\end{lemma}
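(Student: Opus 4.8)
The plan is to run the standard associated-bundle construction of spinor bundles, now over the infinite-dimensional manifold \(X\), feeding it the representation theory of \(\spin_J\) recorded in \cite{rppr,apgs}; as the introduction promises, nothing here differs from the finite-dimensional case. First unwind the (co-)spin structure: the (co-)orthogonal structure on \(T X\) with structure group \(\gl_J\), after the reduction to \(S O_J\) that is part of its definition, presents \(\overline{T X}\) as a bundle of Hilbert spaces with structure group \(S O_J\); the spin structure then supplies a principal \(\spin_J\)-bundle \(P \to X\) lifting the \(S O_J\)-frame bundle of \(\overline{T X}\) together with a connection on \(P\) (lifting a given connection on \(\overline{T X}\) if one was chosen). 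Everything else is manufactured from \(P\) and this connection.

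Next bring in the linear input from \cite{rppr,apgs}. The group \(\spin_J\) acts on two complex Hilbert spaces, the half-spinor representations \(\mathbf{S}^+\) and \(\mathbf{S}^-\) (the even and odd parts of the Fock space of a polarisation), and the Clifford module structure provides a bilinear map \(c \colon H \times \mathbf{S}^\pm \to \mathbf{S}^\mp\) equivariant for the \(S O_J\)-action on \(H\) and the \(\spin_J\)-action on \(\mathbf{S}^\pm\), with \(c(v)\) equal to \(\|v\|\) times a unitary, so \(c(v)^2 = -\|v\|^2\). All of this is functorial, so I form the associated complex Hilbert bundles \(S^\pm \coloneqq P \times_{\spin_J} \mathbf{S}^\pm\). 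Since \(\overline{T X} = P \times_{\spin_J} H\), equivariance of \(c\) makes it descend to fibrewise bilinear bundle maps \(c^\pm \colon \overline{T X} \times S^\pm \to S^\mp\), and the connection on \(P\) induces, by the usual recipe, covariant derivatives \(\nabla^\pm\) on \(S^\pm\). This yields all the data in the first paragraph of the statement.

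For the second assertion I must see that \(c^\pm\), defined on the algebraic tensor product by \(\sum_i e_i \otimes s_i \mapsto \sum_i c^\pm(e_i) s_i\) (with \(\{e_i\}\) an orthonormal basis of a fibre of \(\overline{T X}\)), extends to \(\overline{T X} \otimes_2 S^\pm\). As a densely-defined linear map this is automatic; for it to be continuous, Cauchy--Schwarz gives
\[
  \Bigl\| \sum_i c^\pm(e_i) s_i \Bigr\|^2 \le \Bigl( \sum_i \| c^\pm(e_i) \|^2 \Bigr) \Bigl( \sum_i \| s_i \|^2 \Bigr) = \Bigl( \sum_i \| c^\pm(e_i) \|^2 \Bigr)\, \| \xi \|_2^2 ,
\]
so the continuous extension exists precisely when \(\sum_i \| c^\pm(e_i) \|^2 < \infty\), a Hilbert--Schmidt-type condition on how the Clifford operators sit along an orthonormal basis. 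I expect this estimate, rather than anything bundle-theoretic, to be the only genuinely infinite-dimensional point: it is what the classification of Section~\ref{sec:coriem} exists to control, it holds for the structures of Hilbert--Schmidt type that occur in the examples above and in the loop space of a string manifold, and its failure for a generic orthogonal structure is precisely why the naive infinite-dimensional Dirac operator does not converge. I would therefore record the bundle construction in the brief associated-bundle form above and present the \(\otimes_2\) statement as the well-known consequence of the (co-)orthogonal structure being of the appropriate class, with the displayed estimate as the mechanism.
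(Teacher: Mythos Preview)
The paper supplies no proof of this lemma: it is flagged as ``well-known'', the representation theory is outsourced to \cite{rppr,apgs}, and the statement carries a \noproof. Your first two paragraphs---building \(S^\pm\), \(\nabla^\pm\), and \(c^\pm\) as associated-bundle data from the principal \(\spin_J\)-bundle and its connection---are exactly the standard construction one would fill in, and there is nothing to object to there.

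Your third paragraph, however, contains a genuine gap. First, the displayed estimate is vacuous: you yourself recorded that \(c(v)\) is \(\|v\|\) times a unitary, so \(\|c^\pm(e_i)\| = 1\) for every member of an orthonormal basis and \(\sum_i \|c^\pm(e_i)\|^2 = \infty\) in infinite dimensions. Second, and more fundamentally, you have conflated this lemma with Theorem~\ref{th:dirac}. The lemma carries \emph{no} per-Hilbert--Schmidt hypothesis; it is asserted for an arbitrary (co-)spin manifold. The extension of \(c^\pm\) to \(\overline{T X} \otimes_2 S^\pm\) is a statement purely about the model Clifford action \(H \times \mathbf{S}^\pm \to \mathbf{S}^\mp\) on the Fock representation and has nothing whatsoever to do with the (co-)orthogonal structure on \(T X\)---that structure only serves to identify the fibres of \(\overline{T X}\) with \(H\). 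The per-Hilbert--Schmidt condition enters only in the theorem, and for a different map: it guarantees that precomposition with \(\overline{T X} \to T X\) sends \(\m{L}(T X, S^+)\) into \(\m{H}(\overline{T X}, S^+)\), which is the step where the orthogonal (as opposed to co-orthogonal) construction fails. So the mechanism you propose is aimed at the wrong target; the \(\otimes_2\) extension in the lemma must be argued from the Clifford relations in the Fock model (the ``well-known'' input from \cite{rppr,apgs}), not from any hypothesis on the bundle.
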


The Hilbert completion of the tensor product of two Hilbert spaces has the following property.
If \(H_1\) and \(H_2\) are Hilbert spaces then \(H_1^* \otimes_2 H_2\) is canonically isomorphic to the space of Hilbert\enhyp{}Schmidt operators \(H_1 \to H_2\); we shall denote this latter space by \(\m{H}(H_1, H_2)\).

\begin{theorem}
\label{th:dirac}
Let \(X\) be a per\enhyp{}Hilbert\enhyp{}Schmidt co\hyp{}spin manifold.
Then \(X\) admits a differential operator
\[
  \dirac^+ \colon \Gamma(S^+) \to \Gamma(S^-)
\]
which we call the \emph{Dirac operator}.
\end{theorem}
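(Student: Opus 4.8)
The plan is to imitate the finite\enhyp{}dimensional construction of the Dirac operator with no essential change. Write $\iota\colon\overline{TX}\to TX$ for the dense bundle map supplied by the co\hyp{}orthogonal structure, and let $\sharp\colon T^*X\to\overline{TX}$ be its transpose, regarded as a map into $\overline{TX}$ via the Riesz isomorphism $\overline{TX}^*\cong\overline{TX}$; this is the infinite\enhyp{}dimensional counterpart of the musical isomorphism $T^*X\cong TX$, the difference being that in finite dimensions $\sharp$ is invertible whereas here it is only a dense inclusion. The construction is then the composite whose three stages are the covariant derivative $\nabla^+$ of the co\hyp{}spin structure, the raising map $\sharp$ applied in the cotangent slot, and the Clifford multiplication $c^+$ of the preceding Lemma (extended to the Hilbert tensor product):
\[
  \Gamma(S^+) \longrightarrow \Gamma(\m{L}(TX,S^+)) \longrightarrow \Gamma(\overline{TX}\otimes_2 S^+) \longrightarrow \Gamma(S^-).
\]
I would first record the point at which an \emph{orthogonal} structure fails here: for such a structure the analogous raising map runs $\overline{TX}\to T^*X$, the wrong way, so $\nabla^+ s$ cannot be carried into $\overline{TX}\otimes_2 S^+$ and the construction stalls --- this is exactly the obstruction the paper is built to remove.

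The heart of the matter is that precomposing $\nabla^+ s$ with $\iota$ does land in the \emph{completed} tensor product, and that this is precisely what the per\enhyp{}Hilbert\enhyp{}Schmidt hypothesis buys us. Concretely, for $s\in\Gamma(S^+)$ and $x\in X$ the derivative $(\nabla^+ s)_x\colon T_xX\to S^+_x$ is a continuous linear map into a Hilbert space, so $(\nabla^+ s)_x\circ\iota_x\colon\overline{T_xX}\to S^+_x$ is a bounded operator between Hilbert spaces; the per\enhyp{}Hilbert\enhyp{}Schmidt hypothesis says exactly that every such operator is Hilbert\enhyp{}Schmidt and that the induced assignment $\m{L}(T_xX,S^+_x)\to\m{H}(\overline{T_xX},S^+_x)$ is continuous. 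Combining the canonical identification $\m{H}(\overline{T_xX},S^+_x)\cong\overline{T_xX}^*\otimes_2 S^+_x$ with the Riesz isomorphism $\overline{T_xX}^*\cong\overline{T_xX}$ (genuinely linear, as $TX$ is a real bundle) places $(\nabla^+ s)_x\circ\iota_x$ inside $\overline{T_xX}\otimes_2 S^+_x$, so $c^+_x$ applies and we may set $(\dirac^+ s)(x)\coloneqq c^+_x\bigl((\nabla^+ s)_x\circ\iota_x\bigr)$. The continuity clause above, together with functoriality of $\m{H}$ in both arguments, is what upgrades this fibrewise recipe to a smooth section: $x\mapsto(\nabla^+ s)_x\circ\iota_x$ is a smooth section of $\m{L}(\overline{TX},S^+)$, it factors smoothly through the Hilbert bundle $\m{H}(\overline{TX},S^+)\cong\overline{TX}\otimes_2 S^+$, and composition with the smooth bundle map $c^+$ delivers $\dirac^+ s\in\Gamma(S^-)$.

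It remains to check that $\dirac^+$ is a first\enhyp{}order differential operator, which is immediate: $\nabla^+$ is first order while $\sharp$ and $c^+$ are order\enhyp{}zero bundle maps, so $\dirac^+=c^+\circ\sharp\circ\nabla^+$ is first order, the zeroth\enhyp{}order part of $\dirac^+(f s)$ being the Clifford action of $\sharp(d f)$ on $s$. As the introduction emphasises, there is no genuine obstacle once the input is oriented correctly --- the whole content is that a co\hyp{}orthogonal structure, unlike an orthogonal one, makes $\sharp$ point from $T^*X$ into $\overline{TX}$. The one step needing real care is the passage from $\m{L}$ to $\m{H}$: Clifford multiplication is defined only on the \emph{Hilbert} tensor product $\overline{TX}\otimes_2 S^+$, never on an algebraic or projective tensor product, and it is precisely the per\enhyp{}Hilbert\enhyp{}Schmidt hypothesis --- through the identification $\m{H}(H_1,H_2)\cong H_1^*\otimes_2 H_2$ --- that guarantees the precomposed covariant derivative lands there, and does so smoothly in $x$.
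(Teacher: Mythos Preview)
Your argument is correct and is essentially the paper's own proof: precompose \(\nabla^+ s\) with the co\hyp{}orthogonal map \(\iota\colon\overline{TX}\to TX\), invoke the per\hyp{}Hilbert\enhyp{}Schmidt hypothesis to land in \(\m{H}(\overline{TX},S^+)\cong\overline{TX}\otimes_2 S^+\), and apply \(c^+\). Your additional remarks on \(\sharp\), on where the orthogonal version fails, and on first\hyp{}order behaviour are consonant with the paper's surrounding discussion but go slightly beyond what its proof records.
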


\begin{proof}
Let us proceed along the construction.
We wish to define a map from \(\Gamma(S^+)\) to \(\Gamma(S^-)\).
This map is a composition.
Its beginning and end are simple to describe.
The covariant differential operator on \(S^+\) is a map
\[
  \nabla^+ \colon \Gamma(S^+) \to \Gamma(\m{L}(T X, S^+)).
\]
The bilinear map extends to a map
\[
  c^+ \colon \Gamma(\overline{T X} \otimes_2 S^+) \to \Gamma(S^-).
\]
Recall that \(\overline{T X}\) is the bundle of Hilbert spaces associated to the co\hyp{}Hilbert structure on \(T X\).

As \(\overline{T X}\) is a bundle of real Hilbert spaces, it is canonically identified with its dual.
Thus we can regard the domain of \(c^+\) as \(\Gamma((\overline{T X})^* \otimes_2 S^+)\) and hence as \(\Gamma(\m{H}(\overline{T X}, S^+))\).
It remains to pass from \(\m{L}(T X, S^+)\) to \(\m{H}(\overline{T X}, S^+)\).

As we started from a co\hyp{}spin structure we have a map \(\overline{T X} \to T X\).
Thus we obtain \(\m{L}(T X, S^+) \to \m{L}(\overline{T X}, S^+)\).
Now we use the assumption that the co\hyp{}spin structure is of per\hyp{}Hilbert\enhyp{}Schmidt type to deduce that this factors through \(\m{H}(\overline{T X}, S^+)\).
Thus we define
\[
  \dirac^+ \colon \Gamma(S^+) \xrightarrow{\nabla^+} \Gamma(\m{L}(T X, S^+)) \to \Gamma(\m{H}(\overline{T X}, S^+)) \cong \Gamma(\overline{T X} \otimes_2 S^+) \xrightarrow{c^+} \Gamma(S^-). \qedhere
\]
\end{proof}

If we had started from a spin structure we would have been fine up to the point where we wanted a map from \(\Gamma(\m{L}(T X, S^+))\) to \(\Gamma(\m{H}(\overline{T X}, S^+))\).
As, for a spin structure, we have a map \(T X \to \overline{T X}\) we would obtain instead \(\m{H}(\overline{T X}, S^+) \to \m{L}(T X, S^+)\).
In finite dimensions this is an isomorphism and so we can still define the Dirac operator.
In infinite dimensions it cannot be so (the former is a Hilbert space, the latter is not).

Of course, under the conditions of Theorem~\ref{th:dirac}, \(X\) admits a Dirac operator \(\dirac^- \colon \Gamma(S^-) \to \Gamma(S^+)\) as well.

\medskip

There are two views of the construction of the Dirac operator in finite dimensions.
One views, as we did above, the bundles \(S^+\) and \(S^-\) as being constructed from the tangent spaces.
The other views them as being constructed from the cotangent spaces.
The two views are equivalent as the Riemannian structure identifies the tangent and cotangent spaces leading to natural isomorphisms at every stage of the construction.
The (formal) difference is that in one the map \(c^+\) has domain \(T M \times S^+\) whilst in the other it has domain \(T^* M \times S^+\).

We have the same two views in infinite dimensions.
The co\hyp{}orthogonal structure on \(T X\) defines an orthogonal structure on \(T^* X\).
The associated Hilbert bundle, \(\overline{T^* X}\), is dual to \(\overline{T X}\).
The inclusion \(T^* X \to \overline{T^* X}\) defines an inclusion \(T^* X \otimes S^+ \to \overline{T^* X} \otimes S^+\).
As the co\hyp{}orthogonal structure is per\hyp{}Hilbert\enhyp{}Schmidt, this extends to a map \(\m{L}(T X, S^+) \to \overline{T^* X} \otimes_2 S^+\).
Hence we define an operator
\[
  \Gamma(S^+) \xrightarrow{\nabla^+} \Gamma(\m{L}(T X, S^+)) \to \Gamma(\overline{T^* X} \otimes_2 S^+) \xrightarrow{c^+} \Gamma(S^-).
\]

Under the canonical identification of \(\overline{T^* X}\) with \(\overline{T X}\), these two operators are the same.

\medskip

Recall that a \emph{string manifold} is a finite dimensional manifold together with enough structure that its loop space is naturally a spin manifold.
In \cite{math/0809.3108} we prove the following theorem.

\begin{theorem}
Let \(M\) be a finite dimensional string manifold.
Then \(L M \coloneqq \Ci(S^1, M)\) is a locally equivalent co\hyp{}spin manifold of per\hyp{}Hilbert\enhyp{}Schmidt type.
Moreover, this structure is \(S^1\)\enhyp{}equivariant.
\end{theorem}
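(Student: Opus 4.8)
The plan is to assemble, in turn, the three ingredients of a locally equivalent co\hyp{}spin structure on \(L M\): a locally equivalent co\hyp{}orthogonal structure on \(T L M\) with structure group \(\gl_J\), a reduction of the associated Hilbert bundle to \(S O_J\) together with a lift to \(\spin_J\), and a compatible connection; and then to check that the co\hyp{}orthogonal structure is of per\hyp{}Hilbert\enhyp{}Schmidt type and that the whole construction is \(S^1\)\enhyp{}equivariant for the loop\enhyp{}rotation action.

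First I would fix a Riemannian metric on \(M\) and identify \(T_\gamma L M\) with \(\Gamma(\gamma^* T M)\), so that \(T L M\) has model fibre \(L \R^n \coloneqq \Ci(S^1,\R^n)\) and structure group the smooth loop group \(L \gl_n\) acting pointwise. The metric reduces this to \(L O_n\), and since a string manifold is in particular oriented, to \(L S O_n\). The essential step---and the one I expect to be the main obstacle---is to reduce further to the polynomial loop group \(L_\pol S O_n\). This is plausible because the inclusion \(L_\pol S O_n \hookrightarrow L S O_n\) is a homotopy equivalence, so the classifying map of \(T L M\) factors through \(B L_\pol S O_n\) up to homotopy; but turning this into an honest reduction of the smooth bundle---equivalently, producing an atlas of \(L M\) whose transition cocycle consists of polynomial loops---requires genuine work. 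Granting the reduction, let \(H_r \coloneqq L^2_r \R^n\) be the Hilbert space of loops admitting an analytic extension to the annulus \(\{r \le |z| \le r^{-1}\}\) with square\enhyp{}integrable boundary values. Multiplication by a polynomial \(S O_n\)\enhyp{}valued loop is a bounded operator on \(H_r\) and restricts to multiplication on \(L \R^n\), so \(L_\pol S O_n\) acts on \(H_r\), and the associated bundle \(\zeta \coloneqq P' \times_{L_\pol S O_n} H_r\) (with \(P'\) the reduced frame bundle) carries a fibrewise dense map \(\zeta \to T L M\) which, in the charts coming from \(P'\), is the fixed inclusion \(H_r \hookrightarrow L \R^n\). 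This is precisely a locally equivalent co\hyp{}orthogonal structure; and since \(L_\pol S O_n\) acts on \(H_r\) through its restricted general linear group \(\gl_J\) (the off\enhyp{}diagonal blocks with respect to the Fourier polarisation being of finite rank), we may take its structure group to be \(\gl_J\).

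Checking the per\hyp{}Hilbert\enhyp{}Schmidt condition---that precomposition with \(\zeta \to T L M\) sends every continuous linear map \(T_\gamma L M \to K\), with \(K\) a Hilbert space, to a Hilbert\enhyp{}Schmidt operator \(\zeta_\gamma \to K\), continuously---is the easy point. Indeed, for \(r < r' < 1\) the inclusion \(L^2_r \R^n \hookrightarrow L^2_{r'} \R^n\) is Hilbert\enhyp{}Schmidt, since the Fourier vector \(e_k\) has norm ratio \((r/r')^{|k|}\) between the two and \(\sum_k (r/r')^{2|k|} < \infty\); and any continuous linear map from \(L \R^n\) to a Hilbert space restricts continuously to \(L^2_{r'} \R^n\). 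Composing gives the factorisation \(H_r \to L^2_{r'} \R^n \to K\) with Hilbert\enhyp{}Schmidt first factor, and the continuity of the induced map follows. For the spin structure I would reduce the structure group of \(\zeta\) from \(\gl_J\) to \(S O_J\)---possible because \(S O_J\) is a deformation retract of \(\gl_J\), so every \(\gl_J\)\enhyp{}bundle of Hilbert spaces reduces---and then lift the resulting \(S O_J\)\enhyp{}bundle to \(\spin_J\). The obstruction to this lift is the transgression to \(L M\) of \(\tfrac12 p_1(M)\); a string structure on \(M\) is exactly a trivialisation of \(\tfrac12 p_1\), and the standard transgression argument turns it into the desired lift. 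Finally, the Levi\hyp{}Civita connection on \(M\) induces a connection on \(T L M\), hence on \(\zeta\) and on its orthonormal frame bundle, and one chooses a lift of it to a connection on the \(\spin_J\)\enhyp{}bundle; this completes the co\hyp{}spin structure.

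It remains to observe that the construction is \(S^1\)\enhyp{}equivariant, where \(S^1\) acts on \(L M\) by rotating loops. That action lifts canonically to \(T L M\) and to \(\zeta\); the space \(H_r\), the group \(L_\pol S O_n\), the inclusion \(H_r \hookrightarrow L \R^n\) and the Fourier polarisation are all rotation\enhyp{}invariant, and the metric, frame\hyp{}bundle and connection steps are manifestly equivariant. The only place where equivariance must be imposed rather than observed is the \(\spin_J\)\enhyp{}lift, where one takes the string structure on \(M\)---hence the trivialisation of the transgressed class---to be chosen \(S^1\)\enhyp{}equivariantly; this is possible because the transgression map is itself defined \(S^1\)\enhyp{}equivariantly. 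Assembling these pieces gives the theorem, the details of which are carried out in \cite{math/0809.3108}.
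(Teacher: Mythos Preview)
The paper does not prove this theorem; it explicitly attributes the proof to the companion paper \cite{math/0809.3108} and states only the result, so there is no in\hyp{}paper argument to compare against. Your outline is consistent with the hints the present paper does give\emhyp{}in particular, the example at the end of the section on orthogonal structures, which records the reduction of the structure group from \(L \gl_n\) to \(L_\pol O_n\) and the use of the Hilbert spaces \(L^2_r \R^n\) with the inclusion \(L^2_r \R^n \hookrightarrow L \R^n\)\emhyp{}and you correctly flag the polynomial\hyp{}loop reduction as the step where the real work lies. Beyond that, there is nothing in this paper against which to check the details of your sketch.
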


We therefore obtain the following corollary.

\begin{corollary}
Let \(M\) be a string manifold.
Then \(L M \coloneqq \Ci(S^1, M)\) admits a \(S^1\)\enhyp{}equivariant Dirac operator. \noproof
\end{corollary}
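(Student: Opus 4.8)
The plan is to combine the two theorems stated immediately before the corollary. The first, Theorem~\ref{th:dirac}, says that any per-Hilbert-Schmidt co-spin manifold admits a Dirac operator $\dirac^+ \colon \Gamma(S^+) \to \Gamma(S^-)$. The second, the theorem quoted from \cite{math/0809.3108}, says that for a string manifold $M$ the loop space $L M = \Ci(S^1, M)$ carries a locally equivalent co-spin structure of per-Hilbert-Schmidt type, and moreover that this structure is $S^1$-equivariant. So the first step is simply to observe that $L M$, being a per-Hilbert-Schmidt co-spin manifold, falls under the hypotheses of Theorem~\ref{th:dirac} (the ``locally equivalent'' refinement only strengthens the conclusion and is irrelevant to applicability), and hence admits a Dirac operator $\dirac^+$.

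The second step is to upgrade this to an $S^1$-equivariant statement. Here I would unwind the construction in the proof of Theorem~\ref{th:dirac}: the operator $\dirac^+$ is the composite $\nabla^+$, followed by the map $\m{L}(T X, S^+) \to \m{H}(\overline{T X}, S^+)$ induced by the co-spin structure's map $\overline{T X} \to T X$ together with the per-Hilbert-Schmidt property, followed by the identification $\m{H}(\overline{T X}, S^+) \cong \overline{T X} \otimes_2 S^+$, followed by Clifford multiplication $c^+$. Each of these four arrows is built from data that, by the second theorem, is $S^1$-equivariant: the $S^1$-action on $L M$ (by rotation of loops) lifts to the co-spin structure, hence to $\overline{T(L M)}$, to the spinor bundles $S^\pm$, to the connections $\nabla^\pm$, and to the Clifford maps $c^\pm$; the per-Hilbert-Schmidt factorization is canonical and therefore automatically intertwines the action. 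Consequently the composite $\dirac^+$ commutes with the induced $S^1$-action on sections, i.e. is $S^1$-equivariant.

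I do not expect any serious obstacle: the corollary is essentially a bookkeeping consequence of the two theorems, and the only point requiring a little care is checking that equivariance is preserved at each stage of the composite — in particular that the canonical isomorphism $\m{H}(H_1,H_2) \cong H_1^* \otimes_2 H_2$ and the per-Hilbert-Schmidt factorization are natural enough to be $S^1$-equivariant, which they are since both are defined without auxiliary choices. This is exactly why the statement is flagged as a corollary and given with \noproof in the excerpt.
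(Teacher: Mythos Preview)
Your proposal is correct and matches the paper's approach: the corollary is marked \noproof\ precisely because it is intended as an immediate consequence of Theorem~\ref{th:dirac} together with the quoted theorem from \cite{math/0809.3108}, exactly as you describe. Your additional unwinding of the equivariance through each step of the composite is more detail than the paper provides, but it is sound and is the natural way to justify the $S^1$-equivariance claim.
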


\mybibliography{arxiv,articles,books,misc}

\end{document}